\newcommand{\mQ}{\mathbb{Q}}
\newcommand{\pa}{\left(}
\newcommand{\pb}{\right)}
\newcommand{\cofib}{\rightarrowtail}
\newcommand{\fib}{\twoheadrightarrow}
\newcommand{\quism}{\stackrel{\simeq}{\longrightarrow}}
\newcommand{\sul}[1]{\pa\Lambda #1,d\pb}
\newcommand{\secat}{{\rm secat}}
\newcommand{\msecat}{{\rm msecat}}
\newcommand{\hsecat}{{\rm Hsecat}}
\newcommand{\relcat}{{\rm relcat}}
\newcommand{\cat}{{\rm cat}}
\newcommand{\nil}{{\rm nil}}
\newcommand{\tc}{{\rm TC}}
\newcommand{\mtc}{{\rm mTC}}
\newcommand{\id}{{\rm Id}}
\newcommand{\cdga}{{\bf cdga}}
\newcommand{\ol}{\overline}
\newtheorem{theorem}{Theorem}
\newtheorem{lemma}[theorem]{Lemma}
\newtheorem{proposition}[theorem]{Proposition}
\newtheorem{definition}[theorem]{Definition}
\newtheorem{conjecture}[theorem]{Conjecture}
\begin{document}
\title{The rational sectional category of certain maps\footnotetext{This work was partially supported by FEDER through the Ministerio de Educaci\'on y Ciencia project MTM2013-41768-p and the Belgian Interuniversity Attraction Pole (IAP) within the framework ``Dynamics, Geometry and Statistical Physics'' (DYGEST P7/18)}}
\author{J.G. Carrasquel-Vera\footnotetext{Institut de Recherche en Math\'ematique et Physique, Universit\'e catholique de Louvain, 2 Chemin du Cyclotron, 1348 Louvain-la-Neuve, Belgium.
E-mail: \texttt{jose.carrasquel@uclouvain.be}}}

\date{}

\maketitle
\begin{center}
\emph{To my mother}
\end{center}

\begin{abstract}
We give a simple algebraic characterisation of the sectional category of rational
maps admitting a homotopy retraction. As a particular case we get the
F´elix-Halperin theorem for rational Lusternik-Schnirelmann category and prove the conjecture of Jessup-Murillo-Parent on rational topological complexity. We also give a characterisation for relative category in the sense of Doeraene-El Haouari.
\end{abstract}

 \vspace{0.5cm}
 \noindent{2010 \textit{Mathematics Subject Classification} : 55M30, 55P62.}\\
 \noindent{\textit{Keywords}: Rational homotopy, sectional category, topological complexity.}
 \vspace{0.2cm}

\maketitle

\section*{Introduction}
\noindent Throughout this work we consider all spaces to be of the homotopy type of simply connected CW-complexes of finite type and use standard rational homotopy techniques which are explained in the excellent text \cite{Bible}. Sectional category is an invariant of the homotopy type of maps introduced by Schwarz in \cite{Schwarz66}. If $f\colon X\rightarrow Y$ is a continuous map, its \emph{sectional category} is the smallest $m$ for which there are $m+1$ local homotopy sections for $f$ whose sources form an open cover of $Y$. Its most studied particular case is the well known Lusternik-Schnirelmann (LS) category of a space $X$ introduced in \cite{Lusternik34} as a lower bound for the number of critical points on any smooth map defined on a smooth manifold $X$. Namely, the LS category of a pointed space $X$, $\cat(X)$, is the sectional category of the base point inclusion map, $*\hookrightarrow X$.\\

A remarkable theorem of F\'elix-Halperin \cite[Theorem 4.7]{Felix82} gives an algebraic characterisation of LS category of rational spaces in terms of their Sullivan models. Explicitly, if $X$ is a space modelled by $\sul{V}$ and $X_0$ is its rationalisation (see \cite{Bible,Su77}) then $\cat(X_0)$ is the smallest $m$ for which the commutative differential graded algebra (cdga) projection \[\rho_m\colon \sul{V}\rightarrow \pa\frac{\Lambda V}{\Lambda^{>m}V},\ol{d}\pb\] admits a homotopy retraction, that is, a strict retraction for a relative Sullivan model for $\rho_m$.\\

Let $f\colon X\rightarrow Y$ be a map such that its rationalisation $f_0$ admits a homotopy retraction $r$. Then, through standard rational homotopy techniques $f$ can be modelled by a retraction $\varphi\colon (B\otimes\Lambda W,D)\rightarrow (B,d)$ of a relative Sullivan algebra $(B,d)\cofib (B\otimes\Lambda W,D)$ modeling $r$. For simplicity in the notation, write $\varphi\colon A\rightarrow B$ and call it from now on an \emph{s-model} of $f$.

\begin{theorem}\label{th:MainIntro}
The sectional category of the rationalisation of $f$, $\secat(f_0)$, is the smallest $m$ for which the cdga\ projection \[A\rightarrow \frac{A}{(\ker\varphi)^{m+1}}\] admits a homotopy retraction.
\end{theorem}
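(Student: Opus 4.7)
My plan is to follow the template of Felix--Halperin's proof of their characterization of rational LS category, adapting it to sectional category via the Ganea--Schwarz tower.

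The starting point is Schwarz's theorem: $\secat(f_0) \leq m$ if and only if the $m$-th Ganea map $g_m \colon G_m(f_0) \to Y_0$ admits a homotopy section. The standard contravariant dictionary of rational homotopy then equates this with the existence of a homotopy retraction, in the sense of Felix--Halperin, for any Sullivan cdga model of $g_m$. Thus the task reduces to identifying such a model as equivalent to the projection $\pi_m \colon A \to A/I^{m+1}$, where $I = \ker\varphi$; once this is done, the equivalence in the theorem statement is immediate.

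The technical core is this identification, which I would prove by induction on $m$. The base case $m = 0$ is immediate: $\pi_0 = \varphi$ is itself an $s$-model of $f_0 = g_0$. For the inductive step, the homotopy pushout $G_{m+1} = G_m \cup_{F_m} CF_m$ defining the Ganea tower translates dually to a pullback of cdgas. Here the retraction hypothesis plays a crucial role: writing $A = B \otimes \Lambda W$ with $I = B \otimes \Lambda^{+}W$, the powers $I^{k}$ admit a transparent description, and the relevant homotopy fibers are compatible with the ideal filtration, so that attaching a cone on $F_m$ corresponds exactly to replacing $A/I^{m+1}$ by $A/I^{m+2}$.

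The main obstacle is verifying this inductive step, namely matching the Sullivan model of the Ganea cone attachment with the next ideal-power quotient. Without the retraction, the iterated fiberwise joins generically produce considerably more elaborate rational models (as evidenced by the difficulty of the Jessup--Murillo--Parent conjecture on rational topological complexity, which Theorem~\ref{th:MainIntro} resolves as an instance). The retraction hypothesis is precisely what reduces the construction to the clean formula $A/I^{m+1}$, by providing the splitting $A = B \oplus I$ of $B$-modules that keeps the powers of $I$ under combinatorial control throughout the induction.
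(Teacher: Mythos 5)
Your outline has the right skeleton --- Schwarz's theorem reduces $\secat(f_0)\le m$ to a section of the $m$-th Ganea map, which dualises to a homotopy-retraction condition on a cdga model --- but the central claim you make in the inductive step does not hold, and it is stronger than what the paper proves.

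You assert that the model of $G_m(f)$ can be identified, up to weak equivalence, with the projection $\rho_m\colon A\to A/(\ker\varphi)^{m+1}$. This is not established in the paper and is not true in general: already in the LS case ($B=\mQ$, $A=\Lambda V$), the Ganea space $P^m(f)$ is not modelled by $\Lambda V/\Lambda^{>m}V$; if it were, the F\'elix--Halperin theorem would follow immediately from Schwarz's theorem with no work. What the paper actually proves (Proposition~\ref{pro:GaneanModelNilN}) is weaker: there exists a model $\lambda_m\colon A\to C_m$ of $G_m(f)$ in the fibrewise pointed category $\cdga(B)$ whose relative nilpotency $\nil_B C_m$ equals exactly $m$. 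Since $\lambda_m$ therefore kills $(\ker\varphi)^{m+1}$, one gets a factorisation $A\xrightarrow{\rho_m} A/(\ker\varphi)^{m+1}\xrightarrow{\overline{\lambda_m}} C_m$ over $B$, and the equivalence of the two homotopy-retraction conditions is then deduced from a comparison result (\cite[Proposition 12]{Carrasquel10}), not from an identification of the two cdgas. The arrow $\overline{\lambda_m}$ is never claimed to be a quasi-isomorphism.

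The substance you are missing is precisely the control of relative nilpotency through the inductive Ganea/join step: your phrase ``the relevant homotopy fibers are compatible with the ideal filtration'' is exactly the assertion that requires proof, and the paper devotes Lemma~\ref{lem:PBandRelativeNilpo} to it. That lemma replaces a homotopy pullback in $\cdga(B)$ by a weakly equivalent object $N$ built explicitly from $B$, $\ker p_C\otimes\Lambda^+(t,dt)$, $\ker p_C\otimes\Lambda^+V\otimes K_\epsilon$ and $R\otimes\Lambda^+V\otimes dt$, and verifies by an explicit maximal-product argument that $\nil_B N=\nil_B C+1$. Nothing of this sort follows from the splitting $A=B\oplus\ker\varphi$ alone; the $B$-module decomposition tells you what the powers of $\ker\varphi$ look like inside $A$, but not what happens to nilpotency after passing through the iterated fibrewise joins, which introduce the auxiliary variables $t$, $dt$, and new Sullivan generators at each stage. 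To repair the proposal you would need to replace the claimed weak equivalence with the factorisation-plus-nilpotency-bound argument, which is in effect to reconstruct the paper's Lemma~\ref{lem:PBandRelativeNilpo} and Proposition~\ref{pro:GaneanModelNilN}.
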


Observe that, choosing $\varphi\colon(\Lambda V,d)\rightarrow \mQ$, this theorem reduces to the F\'elix-Halperin theorem for rational LS-category. On the other hand, it also generalises the Murillo-Jessup-Parent conjecture on rational topological complexity \cite{Jessup12}.\\

Indeed, in his famous paper \cite{Farber03} M. Farber introduced the concept of \emph{topological complexity} of a space $X$, $\tc(X)$, which can be seen as the sectional category of the diagonal map $\Delta\colon X\rightarrow X\times X$. This invariant is used to estimate the \emph{motion planning complexity} of a mechanical system and also has applications to other fields of mathematics \cite{Farber08}. As a direct generalisation of this invariant, Rudyak introduced in \cite{Rudyak10} the concept of higher topological $n$-complexity of a space, $\tc_n(X)$, as the sectional category of the $n$-diagonal map $\Delta_n\colon X\rightarrow X^n$. Several explicit computations of topological complexity of rational spaces have been done in \cite{Carrasquel10,Grant,Jessup12,Lechuga07}. Inspired on the F\'elix-Halperin theorem, Jessup, Murillo and Parent, conjectured that $\tc(X_0)$ is the smallest $m$ such that the projection \[\pa \Lambda V\otimes\Lambda V,d\pb\longrightarrow\pa \frac{\Lambda V\otimes\Lambda V}{K^{m+1}},\ol{d}\pb\] admits a homotopy retraction, where $K$ denotes the kernel of the multiplication morphism $\mu_2\colon \Lambda V\otimes\Lambda V\rightarrow \Lambda V$.\\

Theorem \ref{th:MainIntro} applied to higher topological complexity is a bit more general than the Murillo-Jessup-Parent conjecture. Namely, if $A$ is \emph{any} cdga model for a space $X$, then $\Delta_n$ admits an s-model of the form $\varphi=(\id_A,\eta,\ldots,\eta)\colon A\otimes(\Lambda V)^{\otimes n-1}\rightarrow A$ where $\eta\colon \Lambda V\quism A$ is a Sullivan model for $A$.  From Theorem \ref{th:MainIntro} we immediately deduce:
\begin{theorem}\label{thm:IntroTc}
Let $X$ be a topological space, then $\tc_n(X_0)$ is the smallest $m$ such that the projection \[A\otimes(\Lambda V)^{\otimes n-1}\longrightarrow \frac{A\otimes(\Lambda V)^{\otimes n-1}}{(\ker\ \varphi)^{m+1}}\] admits a homotopy retraction.
\end{theorem}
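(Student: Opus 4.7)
My plan is to specialize Theorem \ref{th:MainIntro} to the map $f = \Delta_n\colon X \to X^n$. By definition $\tc_n(X_0) = \secat((\Delta_n)_0)$, and $\Delta_n$ is split by any projection $p_i\colon X^n \to X$; in particular $(\Delta_n)_0$ admits a homotopy retraction, so Theorem \ref{th:MainIntro} applies and the task reduces to exhibiting a convenient s-model for $\Delta_n$.

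To this end I would fix a cdga model $A$ of $X$ together with a Sullivan model $\eta\colon (\Lambda V, d) \quism A$. The natural candidate for the underlying relative Sullivan algebra is the inclusion into the first tensor factor
\[
A \cofib A \otimes (\Lambda V)^{\otimes(n-1)},
\]
obtained from the Sullivan algebra $(\Lambda V, d)^{\otimes(n-1)}$ by cobase change along the unit $\mQ \to A$. This inclusion is a cdga model for the projection $p_1$, and its strict retraction is the multiplication map
\[
\varphi = (\id_A, \eta, \ldots, \eta)\colon A \otimes (\Lambda V)^{\otimes(n-1)} \to A,\quad a \otimes w_1 \otimes \cdots \otimes w_{n-1} \longmapsto a\cdot\eta(w_1)\cdots\eta(w_{n-1}).
\]
Since $\varphi$ is a strict retraction of a relative Sullivan algebra modelling $p_1$, and since it is homotopic to the standard cdga model of $\Delta_n$, it qualifies as an s-model of $\Delta_n$ in the sense of the introduction.

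Applying Theorem \ref{th:MainIntro} to this $\varphi$ then yields the stated conclusion verbatim, with the projection
\[
A \otimes (\Lambda V)^{\otimes(n-1)} \longrightarrow \frac{A \otimes (\Lambda V)^{\otimes(n-1)}}{(\ker\varphi)^{m+1}}
\]
detecting $\tc_n(X_0)$. The main point that warrants verification is that the cdga data above genuinely model the splitting of $\Delta_n$ by $p_1$ in the rational homotopy category; this reduces to standard properties of tensor products of relative Sullivan algebras and of quasi-isomorphisms in $\cdga$, so I do not anticipate any real obstacle beyond this routine bookkeeping.
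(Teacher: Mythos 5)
Your proposal is correct and takes essentially the same route as the paper: the paper likewise produces the s-model $\varphi=(\id_A,\eta,\ldots,\eta)\colon A\otimes(\Lambda V)^{\otimes n-1}\to A$ (a strict retraction of the relative Sullivan inclusion $A\cofib A\otimes(\Lambda V)^{\otimes n-1}$ modelling the projection $p_1$) and then invokes Theorem \ref{th:MainIntro}. The only difference is that you spell out the verification that this $\varphi$ is indeed an s-model for $\Delta_n$, which the paper leaves implicit.
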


We remark that, since $\secat(f_0)\le\secat(f)$ \cite{Carrasquel10}, we get algebraic lower bounds for \emph{integral} sectional category which are better than $\nil \ker f^*$. Some of the ideas in this paper come from \cite{Cornea94}.


\section{Fibrewise pointed \cdga s and relative nilpotency}
In this section we develop some technical tools that will be needed later on. Let $\mathcal{C}$ be a J-category in the sense of Doeraene \cite{Doeraene90,Doeraene93} or a closed model category in the sense of Quillen \cite{Quillen67} and fix and object $B$ of $\mathcal{C}$. Consider the \emph{fibrewise pointed category} over $B$ \cite[Pg. 30]{B}, denoted by $\mathcal{C}(B)$, whose objects are factorisations of $\id_B$, $B\stackrel{s_X}{\longrightarrow}X\stackrel{p_X}{\longrightarrow}B$, and whose morphism are morphisms in $\mathcal{C}$, $f\colon X\rightarrow Y$, such that $f\circ s_X=s_Y$ and $p_Y\circ f=p_X$. Such a morphism is said to be a \emph{fibration} ($\fib$), \emph{cofibration} ($\cofib$) or \emph{weak equivalence} ($\quism$) if the underlying morphism $f$ is such in $\mathcal{C}$. With these definitions $\mathcal{C}(B)$ is also either a J-category or a closed model category (note that this structure is not the same as that of \cite{Calcines14b}). We denote by $[X,Y]_B$ the homotopy classes of morphism in $\mathcal{C}(B)$ from the fibrant-cofibrant object $X$ into $Y$.\\

Now, and for the rest of the paper, we particularise on $\mathcal{C}=\cdga$. Remark that the fibrant-cofibrant objects of $\cdga(B)$ are precisely the relative Sullivan algebras $\pa B\otimes\Lambda W, D\pb$ with the natural inclusion $B\hookrightarrow \pa B\otimes\Lambda W, D\pb$ and endowed with a given retraction. In this context, the general property \cite{Quillen67} by which weak equivalences induce bijections on homotopy classes reads:

\begin{lemma}\label{lem:FibrewiseLifting}
Suppose $\theta\colon A\rightarrow C$ is a quasi-isomorphism in $\cdga(B)$ and $\pa B\otimes\Lambda V,D\pb$ a fibrant-cofibrant object of $\cdga(B)$, then composition with $\theta$ induces a bijection $\theta_\#\colon[B\otimes\Lambda V,A]_B\rightarrow [B\otimes\Lambda V,C]_B.$
\end{lemma}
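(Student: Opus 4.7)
The plan is to reduce the claim to Quillen's general fact that in a closed model category a weak equivalence between fibrant objects induces a bijection on homotopy classes out of any cofibrant object, and to apply it inside the model structure on $\cdga(B)$ that has just been introduced.

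To do so I would first check that every object of $\cdga(B)$ is fibrant. The terminal object of $\cdga(B)$ is the identity factorisation $\pa B\stackrel{\id}{\to} B\stackrel{\id}{\to} B\pb$, and for an arbitrary object $\pa B\stackrel{s_X}{\to} X\stackrel{p_X}{\to} B\pb$ the retraction $p_X$ has the section $s_X$, hence it is surjective and therefore a fibration in $\cdga$. In particular both $A$ and $C$ are fibrant. Next I would check that $\pa B\otimes\Lambda V,D\pb$ is cofibrant: the initial object of $\cdga(B)$ coincides with the terminal one, so cofibrancy amounts to the inclusion $B\hookrightarrow B\otimes\Lambda V$ being a cofibration in $\cdga$, which is precisely the defining property of a relative Sullivan algebra.

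With these two hypotheses in hand, Quillen's theorem yields the bijection. Explicitly, surjectivity of $\theta_\#$ is obtained by factoring $\theta$ as $A\hookrightarrow A'\afib C$ in $\cdga(B)$ and lifting a given map $B\otimes\Lambda V\to C$ along the acyclic fibration using cofibrancy of $B\otimes\Lambda V$; injectivity is obtained by lifting a $\cdga(B)$-homotopy through a path object. The main obstacle, modest as it is, lies in making the path-object construction relative over $B$: the natural candidate, built from the standard $\cdga$ path object $C\otimes\Lambda(t,dt)$ by transporting the $B$-structure along the section $s_C$, should be checked to actually factor the diagonal $C\to C\times_B C$ inside $\cdga(B)$. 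Once this routine verification is carried out, the lemma is a formal consequence of the model-category machinery.
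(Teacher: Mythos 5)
Your proposal is correct and takes essentially the same route as the paper: after observing that $\cdga(B)$ is a closed model category in which every object is fibrant and the relative Sullivan algebras over $B$ with a retraction are the fibrant--cofibrant objects, the paper simply cites Quillen's general result that a weak equivalence between fibrant objects induces a bijection on homotopy classes of maps out of a cofibrant object. You merely unpack the verification of these hypotheses in slightly more detail.
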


\begin{definition}
Let $A\in\cdga(B)$, its \emph{relative nilpotency index}, $\nil_B(A)$, is the nilpotency index $\nil \ker p_A$ of the ideal $\ker p_A$.  
\end{definition}

The following lemma is crucial. It tells us that we can control the relative nilpotency index of certain homotopy pullbacks of $\cdga(B)$.

\begin{lemma}\label{lem:PBandRelativeNilpo}
Let  $i\colon C\cofib \pa C\otimes\Lambda V, D\pb$ be a cofibration in $\cdga(B)$ such that $D(V)\subset (\ker p_C)\oplus(C\otimes\Lambda^+ V)$ and $p_{C\otimes\Lambda V}(V)=0$. Then, there is an object $N\in\cdga(B)$ weakly equivalent to the homotopy pullback of $i$ and $s_{C\otimes\Lambda V}$ for which $\nil_B\ N=\nil_B\ C+1$. 
\end{lemma}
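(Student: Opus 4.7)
The plan is to realise the homotopy pullback concretely, then to produce a weakly equivalent small model $N\in\cdga(B)$ whose nilpotency is manifestly $\nil_B C+1$, rather than trying to compute the nilpotency of the pullback directly.

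First I model the homotopy pullback using the standard path object $(C\otimes\Lambda V)\otimes\Lambda(t,dt)$ with evaluations $\epsilon_0,\epsilon_1$. Replace $i\colon C\to C\otimes\Lambda V$ by the weakly equivalent fibration $\epsilon_1\colon M\fib C\otimes\Lambda V$, where $M$ is the strict pullback of $i$ along $\epsilon_0$. Then the strict pullback $P=M\times_{C\otimes\Lambda V}B$ along $s_{C\otimes\Lambda V}$ is a model for the homotopy pullback, with its canonical $\cdga(B)$-structure via $p_P(c,p,b)=b$ and $s_P(b)=(s_C(b),s_{C\otimes\Lambda V}(b)\otimes 1,b)$.

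Next I construct the candidate $N$ as a square-zero extension of $C$. Let $\bar V$ be a desuspension of $V$ (so $\bar V^n=V^{n+1}$) and take $N=(C\oplus C\otimes\bar V,D_N)$, with $(C\otimes\bar V)$ a square-zero ideal under left $C$-multiplication. Using the hypothesis, decompose $Dv=\alpha_v+\beta_v$ with $\alpha_v\in\ker p_C$ and $\beta_v\in C\otimes\Lambda^+V$, and define $D_N(\bar v):=\alpha_v+\gamma_v$, where $\gamma_v\in C\otimes\bar V$ is a correction chosen inductively on a degree filtration of $V$ so that $D_N^2=0$. The existence of such $\gamma_v$ uses the relation $d_C\alpha_v=-D\beta_v$ forced by $D^2v=0$, combined with the observation (deduced by projecting onto $C$) that $D\beta_v\in\ker p_C$, which the square-zero ideal has room to absorb. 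Take $s_N:=s_C$ and $p_N|_{C\otimes\bar V}:=0$.

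To see $N\simeq P$ in $\cdga(B)$, I construct a zig-zag of weak equivalences through an intermediate cofibrant object $Q=(C\otimes\Lambda(V\oplus\bar V),\tilde D)\in\cdga(B)$ which maps to $N$ by the quotient modulo $\Lambda^{\geq 2}\bar V$ and sending $V\to 0$, and maps to $P$ by sending $v$ to its image in $C\otimes\Lambda V$ and $\bar v$ to an explicit path element in $(C\otimes\Lambda V)\otimes\Lambda(t,dt)$ realising the boundary relation $Dv=\alpha_v+\beta_v$; both projections are quasi-isomorphisms via Lemma~\ref{lem:FibrewiseLifting} combined with a filtration by word-length on $V$ whose associated graded collapses to a Koszul-type complex. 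For the final nilpotency claim, write $J=\ker p_C$ and $\bar W=C\otimes\bar V$; then $\ker p_N=J\oplus\bar W$ and $\bar W^2=0$, so $(\ker p_N)^m=J^m+J^{m-1}\cdot\bar W$, which vanishes for $m=\nil_B C+2$ and is nonzero for $m=\nil_B C+1$, giving $\nil_B N=\nil_B C+1$.

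The main obstacle is combining the existence of the correction $\gamma_v$ with the construction of the zig-zag $N\leftarrow Q\to P$: both steps rely on the higher-order term $\beta_v$ in $Dv$ being compatible with the augmentation. This is precisely what the hypotheses $D(V)\subset(\ker p_C)\oplus(C\otimes\Lambda^+V)$ and $p_{C\otimes\Lambda V}(V)=0$ guarantee, forcing $\beta_v\in\ker p_{C\otimes\Lambda V}$ and thereby allowing the inductive absorption into $\bar V$ and into the $dt$-component of the path model.
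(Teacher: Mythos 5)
Your proposal aims to build a \emph{minimal} model of the homotopy pullback as a square-zero extension $N=(C\oplus C\otimes\bar V,D_N)$ of $C$. This does not match the paper's construction, and, more importantly, cannot be correct.

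First, a degree error. You set $\bar V^n=V^{n+1}$, so $|\bar v|=|v|-1$, hence $|D_N\bar v|=|v|$. But you decree $D_N\bar v=\alpha_v+\gamma_v$ with $\alpha_v$ a component of $Dv$, so $|\alpha_v|=|v|+1$. These degrees do not match; for a literal desuspension your formula for $D_N$ is not even well-typed. Switching to a suspension does not help (then $|D_N\bar v|=|v|+2$, still $\neq|v|+1$), and with no shift the formula type-checks but, as the next point shows, the object is still wrong.

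Second, and more fundamentally, a square-zero extension of $C$ of the form $C\oplus C\otimes\bar V$ simply has the wrong weak homotopy type for this homotopy pullback, whatever the differential. Take $C=(\Lambda x_2,0)$, $B=\mQ$, $p_C(x)=0$, $V=\langle v_3\rangle$ with $Dv=x^2$; the hypotheses of the lemma hold. Here $i$ models the fundamental class $S^2_\mQ\to\mathbb{CP}^\infty_\mQ$, and the homotopy pullback in $\cdga$ models the homotopy cofibre $\mathbb{CP}^\infty_\mQ/S^2_\mQ$, whose reduced rational cohomology vanishes in degree $2$. On the other hand, for any of your choices of $\bar V$ the complex $N$ has $N^1=0$ and $x\in N^2$ a cocycle, so $H^2(N)\ne 0$. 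Hence $N$ is not weakly equivalent to the homotopy pullback, and the claimed zig-zag $N\leftarrow Q\rightarrow P$ of quasi-isomorphisms cannot exist. What is missing from a square-zero extension is exactly the mechanism by which the cohomology coming from $\ker p_C$ is killed.

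The paper's $N$ is of a genuinely different shape: after factoring $s_{C\otimes\Lambda V}$ through an acyclic path-type object $S=B\oplus\bigl(C\otimes\Lambda V\otimes\Lambda^+(t,dt)\bigr)$ and taking the strict pullback $M'$, the model is
\[
N=B\oplus\bigl(\ker p_C\otimes\Lambda^+(t,dt)\bigr)\oplus\bigl(\ker p_C\otimes\Lambda^+V\otimes K_\epsilon\bigr)\oplus\bigl(R\otimes\Lambda^+V\otimes dt\bigr),
\]
a sub-cdga of $M'$. This begins with $B$, not $C$, and the summand $\ker p_C\otimes\Lambda^+(t,dt)$ (together with $\ker p_C\otimes\Lambda^+V\otimes K_\epsilon$) precisely provides the contracting homotopy that removes the contribution of $\ker p_C$ to cohomology. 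Your nilpotency bookkeeping at the end is sound as arithmetic, but it is applied to the wrong object. To repair the argument you would have to abandon the square-zero idea and, as in the paper, keep explicit track of the acyclic $\Lambda(t,dt)$-pieces inside a concrete pullback model.
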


\begin{proof}In $\cdga(B)$, factor $s_{C\otimes\Lambda V}$ as $B\stackrel{\alpha}{\hookrightarrow}S\stackrel{h}{\fib}C\otimes\Lambda V$ where \[S= B\oplus\pa C\otimes\Lambda V\otimes\Lambda^+(t,dt)\pb,\] in which $t$ has degree $0$, $b(c\otimes v\otimes \xi)=s_C(b)c\otimes v\otimes\xi$, and $h(c\otimes v\otimes t)=c\otimes v$. As $C\otimes\Lambda V\otimes\Lambda^+(t,dt)$ is acyclic, $\alpha$ is a quasi-isomorphism and thus, the homotopy pullback of $i$ and $s_{C\otimes\Lambda V}$ is the pullback
\newdir{ >}{{}*!/-8pt/@{>}}
\[\xymatrix{
M'\ar[d]\ar[r]^{\ol{h}} &C\ar@{ >->}[d]^i\\
S\ar@{->>}[r]_-h &C\otimes\Lambda V\\
}\]
of $i$ and $h$. This is in fact a pullback in $\cdga(B)$ by choosing $p_{M'}=p_C\circ \ol{h}$ and $s_{M'}=(\alpha,s_C)$. To finish, we will construct an object $N$ of $\cdga(B)$ weakly equivalent to $M'$ with $\nil_B\ N=\nil_B\ C+1$.\\

Write $K_\epsilon=\ker \epsilon$ where $\epsilon\colon \Lambda^+(t,dt)\rightarrow\mQ$ is the augmentation sending $t$ to $1$, and consider the $\cdga(B)$ isomorphism \[\eta\colon M\stackrel{\cong}{\longrightarrow}M',\] in which:
\[M= B\oplus\pa C\otimes\Lambda^+(t,dt)\pb\oplus\pa C\otimes\Lambda^+V\otimes K_\epsilon\pb,\]
\[s_M(b)=b,\hspace{0.5cm} p_M(b)=b,\hspace{0.5cm} p_M(c\otimes\xi)=p_C(c)\epsilon(\xi),\hspace{0.5cm} p_M(c\otimes v\otimes \omega)=0,\]
\[\eta(b)=(b,s_C(b)),\hspace{0.5cm}\eta(c\otimes\xi)=(c\otimes1\otimes\xi,c\epsilon(\xi)),\hspace{0.5cm} \eta(c\otimes v\otimes \omega)=(c\otimes v\otimes \omega,0),\] with $b\in B$, $c\in C$, $\xi\in\Lambda^+(t,dt)$, $v\in V$ and $\omega\in K_\epsilon$.\\

Next, write $C=\ker p_C\oplus R$ and consider \[N=B\oplus \pa \ker p_C\otimes\Lambda^+(t,dt)\pb\oplus\pa \ker p_C\otimes\Lambda^+V\otimes K_\epsilon\pb\oplus \pa R\otimes\Lambda^+ V\otimes dt\pb\] which, since $D(V)\subset (\ker p_C)\oplus(C\otimes\Lambda^+V)$, is a sub $\cdga(B)$ of $M$. Moreover, the inclusion $N\hookrightarrow M$ is a weak equivalence in $\cdga(B)$ as the subcomplexes $\pa\ker p_C\otimes \Lambda^+(t,dt)\pb\oplus\pa \ker p_C\otimes\Lambda^+V\otimes K_\epsilon\pb$ and $\pa C\otimes\Lambda^+(t,dt)\pb\oplus\pa \ker p_C\otimes\Lambda^+V\otimes K_\epsilon\pb$ are quasi-isomorphic and the inclusion of quotient complexes $B\oplus\pa R\otimes\Lambda^+V\otimes dt\pb\hookrightarrow B\oplus \pa R\otimes\Lambda^+V\otimes K_\epsilon\pb$ is a quasi-isomorphism.\\

Finally, we have that \[\ker p_N=\pa \ker p_C\otimes\Lambda^+(t,dt)\pb\oplus\pa \ker p_C\otimes\Lambda^+V\otimes K_\epsilon\pb\oplus \pa R\otimes\Lambda^+ V\otimes dt\pb,\]
and thus, a non-trivial product of maximal length in this ideal is given by $z(1\otimes v\otimes dt)$ where $z$ is a non-trivial product of maximal length in $\ker p_C\otimes\Lambda^+(t)$. This proves that $\nil_BN=\nil_BC+1$.
\end{proof}

\section{The main result}\label{sec:relativeNilpAndGaneaModels}
\noindent Let $f\colon X\rightarrow Y$ be a continuous map. Recall from \cite{B,Carrasquel14,Fasso} that, by iterated joins, one can construct an $m$-Ganea map for $f$, $G_m(f)$, fitting into a commutative diagram 
\begin{equation}\label{diag:Ganea}
\xymatrix{
&X\ar[dl]_\iota\ar[dr]^f& \\
P^m(f)\ar[rr]_{G_m(f)}& &Y,\\
}
\end{equation}
and that $\secat(f)\le m$ if and only if $G_m(f)$ admits a homotopy section. Also, if $\varphi\colon A\fib B$ is a surjective model for $f$, then Diagram (\ref{diag:Ganea}) can be modelled by a diagram
\[\xymatrix{
A\ar[dr]_\varphi\ar[rr]^{\kappa_m}&&C_m\ar[dl]^{p_m}\\
&B,&\\
}\]
where $\kappa_m$ models $G_m(f)$ and can be constructed inductively by taking the homotopy pullback of the induced maps by the homotopy pushout of $\varphi$ and any model, $g\colon A\rightarrow D$, of $G_{m-1}(f)$. Standard arguments show that $\secat(f_0)\le m$ if and only if $\kappa_m$ admits a homotopy retraction. One can extend this to:

\begin{definition} Let $f\colon X\rightarrow Y$ be a continuous map. Then:
\begin{enumerate}
\item[(i)] $\msecat(f)\le m$ if and only if $\kappa_m$ admits a homotopy retraction as $A$-module;
\item[(ii)] $\hsecat(f)\le m$ if and only if $\kappa_m$ is homology injective.
\end{enumerate}
\end{definition}
We now give the key model for the $m$-Ganea map $G_m(f)$:
\begin{proposition}\label{pro:GaneanModelNilN}
Let $f$ be a map such that $f_0$ admits a homotopy retraction and let $\varphi\colon A\fib B$ be an s-model for $f$. Then there is a model $\lambda_m$ for $G_m(f)$ which is a morphism in $\cdga(B)$,
\newdir{ >}{{}*!/-8pt/@{>}}
\[\xymatrix{
&B\ar@{ >->}[dl]_s\ar[dr]^{s_m}&\\
A\ar[rr]^{\lambda_m}\ar@{->>}[dr]_\varphi&&C_m\ar@{->>}[dl]^{p_m}\\
&B,\\
}\]
with $\nil_B\ C_m=m$.
\end{proposition}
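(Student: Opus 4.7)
The proof is by induction on $m$.

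\textbf{Base case $m=0$.} Take $C_0 = B$, $\lambda_0 = \varphi$, and $s_0 = p_0 = \id_B$; then $\ker p_0 = 0$, so $\nil_B C_0 = 0$, and $\lambda_0 = \varphi$ manifestly models $G_0(f) = f$.

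\textbf{Inductive step.} Suppose $\lambda_{m-1}\colon A \to C_{m-1}$ has been constructed in $\cdga(B)$, modelling $G_{m-1}(f)$ with $\nil_B C_{m-1} = m-1$. According to the Ganea recipe recalled just above the statement, a model of $G_m(f)$ is the homotopy pullback in $\cdga(B)$ of the two canonical maps $C_{m-1} \to P$ and $B \to P$, where $P$ denotes the homotopy pushout of $\varphi\colon A \to B$ along $\lambda_{m-1}\colon A \to C_{m-1}$. I will realise $P$ in a form suitable for Lemma \ref{lem:PBandRelativeNilpo} by factoring $\varphi$ in $\cdga(B)$ as a relative Sullivan model
\[A \cofib (A\otimes\Lambda V, D) \afib B,\]
arranged (as one can, using the section $s$) so that $p_{A\otimes\Lambda V}(V) = 0$; this forces $D(V) \subset (\ker\varphi) \oplus (A\otimes\Lambda^+ V)$. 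Pushing the cofibration $A\cofib A\otimes\Lambda V$ out along $\lambda_{m-1}$ produces
\[P = (C_{m-1}\otimes\Lambda V, \tilde D) \in \cdga(B),\]
with $\tilde D = (\lambda_{m-1}\otimes\id)\circ D$ on $V$, augmentation $p_P$ equal to $p_{m-1}$ on $C_{m-1}$ and to $0$ on $V$, and section $s_P(b) = s_{m-1}(b)\otimes 1$. The two canonical maps into $P$ coincide respectively with the natural inclusion $i\colon C_{m-1}\cofib C_{m-1}\otimes\Lambda V$ and with $s_P$; the $\cdga(B)$-homotopy commutativity $s_P\circ\varphi \simeq i\circ\lambda_{m-1}$ holds because both $p_{A\otimes\Lambda V}$ and its section $s_{A\otimes\Lambda V}$ are weak equivalences in $\cdga(B)$, so $s_{A\otimes\Lambda V}\circ p_{A\otimes\Lambda V} \simeq \id$.

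The hypotheses of Lemma \ref{lem:PBandRelativeNilpo} are then satisfied for $i$: $p_P(V) = 0$ by construction, and $\tilde D(V) \subset (\ker p_{m-1}) \oplus (C_{m-1}\otimes\Lambda^+ V)$ follows from $p_{m-1}\circ\lambda_{m-1} = \varphi$ (hence $\lambda_{m-1}(\ker\varphi) \subset \ker p_{m-1}$). The lemma yields $C_m := N \in \cdga(B)$ weakly equivalent to the homotopy pullback of $i$ and $s_P$, with $\nil_B C_m = \nil_B C_{m-1} + 1 = m$. The universal property of the homotopy pullback, applied to the compatible pair $(\lambda_{m-1}\colon A\to C_{m-1},\; \varphi\colon A\to B)$, then produces the required $\lambda_m\colon A\to C_m$ in $\cdga(B)$ making the triangle commute. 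The principal technical step is the identification of the homotopy pushout $P$ with the explicit object $(C_{m-1}\otimes\Lambda V,\tilde D)$ carrying the stated $\cdga(B)$-structure, and the matching of its two canonical maps with $i$ and $s_P$; once this identification is in place, Lemma \ref{lem:PBandRelativeNilpo} mechanically bootstraps the relative nilpotency index by one at each inductive step.
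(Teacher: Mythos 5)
Your overall strategy is the same as the paper's: induct on $m$, take a relative Sullivan model $(A\otimes\Lambda V,D)\afib B$ for $\varphi$ with $\theta(V)=0$ and $D(V)\subset(\ker\varphi)\oplus(A\otimes\Lambda^+ V)$, realise the homotopy pushout as $C_{m-1}\otimes\Lambda V$, and invoke Lemma \ref{lem:PBandRelativeNilpo} to control the relative nilpotency. The gap lies entirely in your last step, and it is exactly the point the paper singles out as delicate. You write that ``the universal property of the homotopy pullback, applied to the compatible pair $(\lambda_{m-1},\varphi)$, produces the required $\lambda_m\colon A\to C_m$ in $\cdga(B)$ making the triangle commute.'' But $C_m$ is not the homotopy pullback: it is the object $N$ from Lemma \ref{lem:PBandRelativeNilpo}, only \emph{weakly equivalent} to a specific pullback $M'$ built from the factorisation $B\hookrightarrow S\fib C_{m-1}\otimes\Lambda V$. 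And the square you want to feed into a universal property commutes only up to homotopy: $\beta\circ\varphi$ does not coincide strictly with $i\circ\lambda_{m-1}$, which is precisely why the paper remarks that one \emph{cannot} invoke the pullback's universal property directly for $M'$. A homotopy-commuting square does not hand you a strict morphism of $\cdga(B)$ into a chosen zig-zag representative of the homotopy pullback.

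What the paper does instead, and what your argument is missing, is a two-stage construction: first build a \emph{strict} model $T$ of the homotopy pullback by factoring $\lambda_{m-1}\otimes\id$ as a weak equivalence $w$ followed by a surjection $q$, so that the square with $i$ and $q$ commutes on the nose and the strict universal property produces $g\colon A\to T$; then compare $T$ with $M'$ via Doeraene's \cite[Lemma 1.8]{Doeraene93} to obtain a zig-zag of quasi-isomorphisms $T\stackrel{\simeq}{\longleftarrow}\bullet\stackrel{\simeq}{\longrightarrow}M'\stackrel{\simeq}{\longleftarrow}C_m$ in $\cdga(B)$; and finally use Lemma \ref{lem:FibrewiseLifting} (that quasi-isomorphisms induce bijections on $\cdga(B)$-homotopy classes out of the fibrant-cofibrant object $A$) to transport $g$ across this zig-zag to a genuine morphism $\lambda_m\colon A\to C_m$ in $\cdga(B)$. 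Your proposal never mentions Lemma \ref{lem:FibrewiseLifting}, never builds $T$, and never addresses how to pass a map across the weak equivalence between the actual homotopy pullback and the nilpotency-controlled model $C_m$; as written, the inductive step does not produce the morphism $\lambda_m$ it claims to produce.
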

\begin{proof}
We will proceed by induction. For $m=0$ the assertion holds since $p_0=\id_B$. Suppose $\lambda_{m-1}$ exists. Since $\varphi$ is surjective, one can take a relative Sullivan model for $\varphi$, $\theta\colon \pa A\otimes\Lambda V,D\pb\quism B,$ such that $D(V)\subset(\ker\varphi)\oplus \pa A\otimes\Lambda^+V\pb$ and $\theta(V)=0$. Now take the homotopy pushout

\newdir{ >}{{}*!/-8pt/@{>}}
\[\xymatrix@C=2cm{
A\ar@{ >->}[d]_j\ar[r]^{\lambda_{m-1}}&C_{m-1}\ar@{ >->}[d]^i\ar@/^0.7cm/[ddr]^{p_{m-1}}\\
A\otimes\Lambda V\ar@/_0.7cm/[drr]_\theta\ar[r]_-{\lambda_{n-1}\otimes\id}&C_{m-1}\otimes \Lambda V\ar@{-->}[dr]^{(\theta,p_{m-1})}\\
&&B\\
}\]
and factor $\lambda_{n-1}\otimes\id$ as $q\circ w$ with $q\colon E\fib C_{m-1}\otimes\Lambda V$ a surjective cdga morphism and $w\colon A\otimes\Lambda V\quism E$ a weak equivalence. Then the pullback's universal property
\newdir{ >}{{}*!/-8pt/@{>}}
\[\xymatrix{
A\ar@{-->}[dr]^g\ar@/_0.7cm/[rdd]_{w\circ j}\ar@/^0.7cm/[rdr]^{\lambda_{m-1}}\\
&T \ar[d]\ar[r]^{\ol{q}}&C_{m-1}\ar@{ >->}[d]^i\\
&E\ar@{->>}[r]_-{q}&C_{m-1}\otimes \Lambda V,\\
}\]
gives a model $g$ for $G_m(f)$ which can be seen as a morphism in $\cdga(B)$ by taking $p_T=p_{m-1}\circ\ol{q}$ and $s_T=g\circ s$. Now, define $\beta=i\circ s_{m-1}\colon B\rightarrow C_{m-1}\otimes\Lambda V$ and consider the factorisation of $\beta=q\circ (w \circ j\circ s)$ as a quasi-isomorphism followed by a fibration. On the other hand, consider also the factorisation of $\beta=h\circ \alpha$ as in the proof of Lemma \ref{lem:PBandRelativeNilpo}. Applying \cite[Lemma 1.8]{Doeraene93} to previous factorisations and the following commutative square in \cdga(B),
\newdir{ >}{{}*!/-8pt/@{>}}
\[\xymatrix{
B\ar[d]_{\id_B}\ar[r]^{s_{m-1}}&C_{m-1}\ar@{ >->}[d]^i\\
B\ar[r]_-\beta&C_{m-1}\otimes\Lambda V,\\
}\]
we get quasi-isomorphisms $M'\stackrel{\simeq}{\longleftarrow}\bullet\stackrel{\simeq}{\longrightarrow}T$.
Now, applying Lemma \ref{lem:PBandRelativeNilpo} to $C_{m-1}\cofib C_{m-1}\otimes \Lambda V$, with $s_{C_{m-1}\otimes \Lambda V}=\beta$ and $p_{C_{m-1}\otimes \Lambda V}=(\theta,p_{m-1})$ we get an object $C_m$ of $\cdga(B)$, with $\nil_B\ C_m=m$, which is weakly equivalent $M'$. Observe that we cannot use the pullback's universal property to get a model of $G_m(f)$ because, in general, $\beta\circ \varphi$ does not coincide with $i\circ\lambda_{m-1}$. We get then a diagram in $\cdga(B)$
\newdir{ >}{{}*!/-8pt/@{>}}
\[\xymatrix{
A\ar[r]^g&T&\bullet\ar[l]_-\simeq\ar[r]^-\simeq&M'&\ar[l]_-\simeq C_m.\\
}\]
Since $A$ is a fibrant-cofibrant object of $\cdga(B)$, we can apply Lemma \ref{lem:FibrewiseLifting} to get a model for $G_m(f)$ in $\cdga(B)$, $\lambda_m\colon A\rightarrow C_m$, with $\nil_B\ C_m=m$.
\end{proof}

Given $\varphi\colon A\fib B$ a surjective cdga morphism, consider, for each $m\ge 0$, the cdga projection \[\rho_m\colon A\rightarrow\frac{A}{(\ker \varphi)^{m+1}}.\] Then Theorem \ref{th:MainIntro} is just statement \emph{(i)} in the following:

\begin{theorem}\label{th:Main}
Let $\varphi\colon A\fib B$ be an s-model for a map $f$ such that $f_0$ admits a homotopy retraction. Then: 
\begin{enumerate}
\item[(i)] $\secat(f_0)$ is the smallest $m$ for which $\rho_m$ admits a homotopy retraction;
\item[(ii)] $\msecat(f_0)$ is the smallest $m$ for which $\rho_m$ admits a homotopy retraction as $A$-module;
\item[(iii)] $\hsecat(f)$ is the smallest $m$ such that $H(\rho_m)$ is injective.
\end{enumerate}
\end{theorem}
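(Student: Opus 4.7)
The plan is to pivot on Proposition~\ref{pro:GaneanModelNilN}. For each $m\ge 0$ it produces $\lambda_m\colon A\to C_m$ in $\cdga(B)$ modelling $G_m(f)$ with $\nil_B C_m=m$, and the discussion preceding that proposition identifies $\secat(f_0)\le m$ (resp.\ $\msecat(f_0)\le m$; $\hsecat(f)\le m$) with $\lambda_m$ admitting a homotopy retraction (resp.\ an $A$-module homotopy retraction; $H(\lambda_m)$ being injective). The whole task is therefore to substitute $\lambda_m$ by the intrinsic projection $\rho_m$ in each of the three characterisations.

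The two maps are linked by a canonical factorisation. Since $\lambda_m$ is a $\cdga(B)$-morphism, $\lambda_m(\ker\varphi)\subset\ker p_{C_m}$, and $\nil_B C_m=m$ forces $(\ker p_{C_m})^{m+1}=0$; hence $\lambda_m$ annihilates $(\ker\varphi)^{m+1}$ and descends uniquely through $\rho_m$ to a $\cdga(B)$-morphism $\tilde\lambda_m\colon A/(\ker\varphi)^{m+1}\to C_m$ with $\lambda_m=\tilde\lambda_m\circ\rho_m$.

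The central technical step is to prove that $\tilde\lambda_m$ is a quasi-isomorphism. I would proceed by induction on $m$, mirroring the inductive construction of $C_m$ inside Proposition~\ref{pro:GaneanModelNilN}. The case $m=0$ is trivial, since $A/\ker\varphi=B=C_0$ and $\tilde\lambda_0=\id_B$. For the inductive step the short exact sequence
\[
0\longrightarrow (\ker\varphi)^m/(\ker\varphi)^{m+1}\longrightarrow A/(\ker\varphi)^{m+1}\longrightarrow A/(\ker\varphi)^m\longrightarrow 0
\]
on the source is compared with the corresponding filtration of $C_m$ over $C_{m-1}$, read off from the explicit description of $N$ at the end of the proof of Lemma~\ref{lem:PBandRelativeNilpo}. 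The inductive hypothesis gives $\tilde\lambda_{m-1}$ as a quasi-isomorphism between the quotients, and a five-lemma-style comparison on the top layer completes the step. This layer-by-layer matching, which has to reconcile the abstract ideal power $(\ker\varphi)^m/(\ker\varphi)^{m+1}$ with the bulky description of $\ker(C_m\twoheadrightarrow C_{m-1})$ coming from the homotopy-pullback construction, is the main obstacle of the proof.

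Once $\tilde\lambda_m$ is known to be a quasi-isomorphism, all three equivalences follow uniformly. Post-composition with $\tilde\lambda_m$ converts any retraction of $\lambda_m$ (strict, $A$-module, or on homology) into one of $\rho_m$. Conversely, given a homotopy retraction $r\colon A'\to A$ of $\rho_m$ realised on a relative Sullivan model $A\cofib A'\afib A/(\ker\varphi)^{m+1}$, one factorises the composition $A'\to A/(\ker\varphi)^{m+1}\xrightarrow{\tilde\lambda_m} C_m$ as $A'\cofib A''\afib C_m$: since $\tilde\lambda_m$ is a quasi-isomorphism, $A'\cofib A''$ is an acyclic cofibration, and $r$ extends along it to a retraction $A''\to A$ of $A\cofib A''$ by the lifting property for trivial cofibrations against fibrant targets. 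This produces the desired homotopy retraction of $\lambda_m$; the same argument applies verbatim in the $A$-module setting for (ii), and (iii) follows immediately from $H(\lambda_m)=H(\tilde\lambda_m)\circ H(\rho_m)$ together with $H(\tilde\lambda_m)$ being an isomorphism.
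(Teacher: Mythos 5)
The central claim of your argument---that $\tilde\lambda_m\colon A/(\ker\varphi)^{m+1}\to C_m$ is a quasi-isomorphism---is false, and the proposal collapses with it. Already the LS-category case of $S^2$ gives a counterexample: take $B=\mathbb{Q}$, $A=(\Lambda(x_2,y_3),\,dy=x^2)$, and $m=1$. Then $A/(\ker\varphi)^2=(\Lambda V/\Lambda^{>1}V,\bar d)$ has vanishing differential and cohomology $\mathbb{Q}$ concentrated in degrees $0,2,3$, whereas $C_1$ is a cdga model of the Ganea space $P^1(*\hookrightarrow S^2)\simeq\Sigma\Omega S^2$, whose rational cohomology is one-dimensional in degree $0$ and in every degree $\ge 2$, hence infinite-dimensional. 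So $\tilde\lambda_m$ can fail to be a quasi-isomorphism in the very first nontrivial instance, and the ``layer-by-layer'' induction you flag as ``the main obstacle'' has no chance of working: the layer $(\ker\varphi)^m/(\ker\varphi)^{m+1}$ and the layer $\ker(C_m\twoheadrightarrow C_{m-1})$ are genuinely different objects.

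The paper's proof does not need---and does not assert---any quasi-isomorphism between $A/(\ker\varphi)^{m+1}$ and $C_m$. It only uses the strict factorisation $\lambda_m=\overline{\lambda_m}\circ\rho_m$, compatible with the augmentations to $B$, together with \cite[Proposition 12]{Carrasquel10}. Concretely: in one direction, a homotopy retraction of $\lambda_m$ restricts to one of $\rho_m$, because a relative Sullivan model of $\lambda_m$ can be built by first modelling $\rho_m$ and then extending through $\overline{\lambda_m}$; in the other direction, $\nil\ker\overline{\varphi}\le m$ holds automatically, so a homotopy retraction of $\rho_m$ yields $\secat(f_0)\le m$ by \cite[Proposition 12]{Carrasquel10}, which in turn gives a homotopy retraction of $\lambda_m$ via the Ganea characterisation. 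This closes the equivalence without comparing cohomologies. The module and homology versions (ii) and (iii) go the same way. Your ``once $\tilde\lambda_m$ is a quasi-isomorphism, everything follows'' step is fine as a conditional statement, but the hypothesis is unattainable; the fix is to replace the quasi-isomorphism claim by an appeal to a nilpotency-based sufficient condition for $\secat$, as the paper does.
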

\begin{proof}
Take from Proposition \ref{pro:GaneanModelNilN} a morphism of $\cdga(B)$, $\lambda_m\colon A\rightarrow C_m$, modelling $G_m(f)$ with $\nil_B\ C_m=m$. Since $\lambda_m((\ker\ \varphi)^{m+1})=0$ we get a commutative diagram
\[\xymatrix{
A\ar[r]^{\lambda_m}\ar[d]_{\rho_m} &C_m\ar[d]^{p_m}\\
\frac{A}{(\ker\ \varphi)^{m+1}}\ar[ur]_-{\overline{\lambda_m}}\ar[r]_-{\ol{\varphi}} & B\\
}\]
and the result follows by standard rational homotopy techniques and \cite[Proposition 12]{Carrasquel10}. 
\end{proof}

Observe that \cite[Example 10]{Carrasquel10} shows that the hypothesis $s$ is a cofibration is necessary.\\

In \cite{St00} D. Stanley gives an example of a map $f$ for which $f_0$ does not admit a homotopy retraction and $\msecat(f)<\secat(f_0)$. Here we state:

\begin{conjecture}
If $f$ is a map and $f_0$ admits a homotopy retraction, then \[\msecat(f)=\secat(f_0).\]
\end{conjecture}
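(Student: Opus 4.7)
My plan is to prove the conjecture by combining parts (i) and (ii) of Theorem \ref{th:Main}. Since both $\secat(f_0)$ and $\msecat(f_0)$ are characterised by homotopy retractions of the same cdga projection $\rho_m\colon A\to A/(\ker\varphi)^{m+1}$, the conjecture reduces to showing that $\rho_m$ admits a cdga homotopy retraction if and only if it admits an $A$-module homotopy retraction. The easy direction $\msecat(f)\le\secat(f_0)$ is immediate, since any cdga retraction is in particular an $A$-module map.

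For the non-trivial inequality $\secat(f_0)\le\msecat(f)$, the idea is to upgrade an $A$-module retraction into a cdga one, in the spirit of Hess's proof that $\mcat(X_0)=\cat(X_0)$. Choose a relative Sullivan model
\[A\cofib (A\otimes\Lambda V,D)\quism A/(\ker\varphi)^{m+1}\]
in $\cdga(B)$, and, starting from a given $A$-module retraction, deform it inductively along the word-length filtration in $V$. At each stage, the failure of multiplicativity is a cocycle whose image under $\rho_m$ must vanish; since the target satisfies $(\ker\varphi)^{m+1}=0$, these defects should be correctable by adding $A$-module perturbations supported in $(\ker\varphi)^{m+1}$.

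The main obstacle will be producing these correcting terms in a way that is simultaneously compatible with the differential $D$, with the $A$-module structure, and with the emerging algebra structure. Obstruction theory of this kind is controlled by cocycles in a bicomplex built out of $\Hom_A((A\otimes\Lambda V)^{\otimes 2},A)$ filtered by powers of $\ker\varphi$ on each factor, and one needs the obstruction classes to be exact at every stage. The decisive input should be the s-model hypothesis itself, which provides a strict splitting $A\cong s(B)\oplus\ker\varphi$ as $s(B)$-modules along which the induction can be organised.

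An alternative, more structural route is to work directly inside $\cdga(B)$: starting from the Ganea model $\lambda_m\colon A\to C_m$ of Proposition \ref{pro:GaneanModelNilN}, where $\nil_B C_m=m$ and the fibrewise-pointed structure already encodes the section of the s-model, attempt to produce a cdga retraction in $\cdga(B)$ from an $A$-module retraction by using Lemma \ref{lem:PBandRelativeNilpo} iteratively to shrink the ``error ideal'' at each step. In either approach, the delicate point is verifying that each stage's obstruction class is genuinely exact, and this is presumably why the statement remains a conjecture rather than a theorem.
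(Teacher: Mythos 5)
This statement is labelled a \emph{conjecture} in the paper, and the paper offers no proof of it; there is therefore nothing to compare your argument against. What you have written is also not a proof, and to your credit you say so yourself in the final sentence. The correct reduction is there: by Theorem~\ref{th:Main}(i)--(ii) the conjecture is equivalent to showing that $\rho_m\colon A\to A/(\ker\varphi)^{m+1}$ admits a homotopy retraction in $\cdga$ as soon as it admits one as an $A$-module map, and the inequality $\msecat(f)\le\secat(f_0)$ is indeed immediate since a cdga retraction is an $A$-module retraction. That part is sound.

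The gap is precisely where you flag it: you do not exhibit the inductive correction that turns an $A$-module retraction into a multiplicative one, nor do you show that the obstruction cocycles you describe are exact at each stage. Saying that the defects ``should be correctable'' and that the s-model splitting ``should be the decisive input'' is a research programme, not an argument; without a concrete vanishing result for those obstruction classes (or an identification of them with classes already known to die), the hard inequality $\secat(f_0)\le\msecat(f)$ is not established. The analogy with the Hess-type ``$\mcat=\cat$'' theorem is suggestive but does not transfer automatically: there the target is the one-point cdga and the relevant nilpotency is absolute, whereas here the target $B$ is arbitrary and the nilpotency is relative, living in $(\ker\varphi)^{m+1}$; nothing in your sketch shows how the extra structure of $\cdga(B)$ compensates. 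Your alternative route through Proposition~\ref{pro:GaneanModelNilN} and Lemma~\ref{lem:PBandRelativeNilpo} has the same problem: those results control $\nil_B$ under homotopy pullbacks, but they do not by themselves produce a multiplicative retraction from a module one. In short, you have correctly identified what would need to be proved and why it is plausible, but you have not proved it, which is consistent with the fact that the paper itself leaves this as an open conjecture.
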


Concerning the rational topological complexity of a given space $X$ and, with the notation in Theorem \ref{thm:IntroTc}, we may define $\mtc(X)$ as the smallest integer $m$ for which the projection \[A\otimes\Lambda V\rightarrow \frac{A\otimes \Lambda V}{(\ker\varphi)^{m+1}}\] admits a homotopy retraction as $A\otimes\Lambda V$-module. Then Theorem \ref{th:Main} (ii) combined with \cite[Theorem 1.6]{Jessup12} gives the Ganea conjecture for $\mtc$.

\begin{theorem}\label{th:GaneaMTC}
Given any space $X$ then $\mtc(X\times S^n)=\mtc(X)+\mtc(S^n).$
\end{theorem}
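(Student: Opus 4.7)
The plan is to establish each inequality separately via the algebraic reformulation in Theorem \ref{th:Main}(ii). First fix s-models $\varphi_X\colon A_X \otimes \Lambda V \fib A_X$ for $\Delta_X$ (as in Theorem \ref{thm:IntroTc}) and $\varphi_S\colon \Lambda W \otimes \Lambda W \fib \Lambda W$ for $\Delta_{S^n}$, where $\Lambda W$ is the minimal model of $S^n$. Since $(X \times S^n)^2 \cong X^2 \times (S^n)^2$ and the diagonal decomposes accordingly, the tensor product $\varphi_X \otimes \varphi_S$ is an s-model for $\Delta_{X \times S^n}$ with kernel $J = \ker \varphi_X \otimes (\Lambda W)^{\otimes 2} + (A_X \otimes \Lambda V) \otimes \ker \varphi_S$.

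For the upper bound, set $p = \mtc(X)$ and $q = \mtc(S^n)$. By Theorem \ref{th:Main}(ii), the projections $\rho_p$ for $X$ and $\rho_q$ for $S^n$ admit module retractions. Their tensor product is a module retraction for the projection of $A_X \otimes \Lambda V \otimes (\Lambda W)^{\otimes 2}$ onto the quotient by the ideal $(\ker \varphi_X)^{p+1} \otimes (\Lambda W)^{\otimes 2} + (A_X \otimes \Lambda V) \otimes (\ker \varphi_S)^{q+1}$, and a multinomial expansion shows this ideal contains $J^{p+q+1}$: any length-$(p+q+1)$ product of elements drawn from the two summands of $J$ must use at least $p+1$ factors from the first or $q+1$ factors from the second. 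Hence Theorem \ref{th:Main}(ii) yields $\mtc(X \times S^n) \leq p + q$.

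The reverse inequality is the substantive part and relies on \cite[Theorem 1.6]{Jessup12}. Translated through the algebraic characterisation of Theorem \ref{th:Main}(ii), that result supplies a Ganea-type additivity for the $\mtc$ invariant upon products with spheres. Concretely, one has to show that a module retraction for $\rho_m$ of $X \times S^n$ with $m < p + q$ would, through the very explicit one- or two-generator structure of $\Lambda W$ (namely $\Lambda(x)$ with $|x|$ odd, or $\Lambda(x,y)$ with $dy = x^2$ in the even case), force the existence of a module retraction for $\rho_{m-q}$ of $X$, contradicting $\mtc(X) = p$.

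The main obstacle is exactly this last reduction: extracting from an $A_X \otimes \Lambda V \otimes (\Lambda W)^{\otimes 2}$-module retraction a bona fide $A_X \otimes \Lambda V$-module retraction compatible with the correct power of $\ker \varphi_X$. The simple, essentially free structure of the sphere's minimal model is indispensable here and is precisely what makes \cite[Theorem 1.6]{Jessup12} applicable. Once both inequalities are in hand, they combine to give $\mtc(X \times S^n) = \mtc(X) + \mtc(S^n)$.
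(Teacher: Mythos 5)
Your proposal takes essentially the same route as the paper: the paper's proof is a one-line citation of Theorem \ref{th:Main}(ii) combined with \cite[Theorem 1.6]{Jessup12}, exactly the two ingredients you identify. Your upper-bound tensor argument is correct in outline but redundant, since the full additivity $\mtc(X\times S^n)=\mtc(X)+\mtc(S^n)$ for the Sullivan-model version of the invariant is already the content of \cite[Theorem 1.6]{Jessup12}, and the only thing Theorem \ref{th:Main}(ii) contributes is that $\mtc$ as defined via an arbitrary s-model agrees with $\msecat(\Delta_0)$ and hence with the specific-model quantity that Jessup--Murillo--Parent work with, making the quantity well-defined and allowing their result to be invoked directly.
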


We finish by presenting, via Theorem \ref{th:Main}, an algebraic description of the rational relative category. Recall \cite{Doeraene13} that the relative category, $\relcat{f}$, of a map $f$ is the smallest $m$ for which $G_m(f)$ of Diagram (\ref{diag:Ganea}) admits a homotopy section $s$ such that $s\circ f\simeq \iota$. Also, in \cite{Doeraene13}, Doeraene and El Haouari proved that $\secat(f)$ and $\relcat(f)$ differ at most by one and conjectured in \cite{Doeraene13b} that they agree on maps admitting a homotopy retraction. Consider then such a map $f$ and $\varphi\colon A\rightarrow B$ and s-model for $f$. This gives a diagram
\newdir{ >}{{}*!/-8pt/@{>}}
\[\xymatrix{
&\pa A\otimes\Lambda Z_{m},D\pb\ar[dr]^{\theta_m}_\simeq&\\
A\ar[dr]_\varphi\ar@{ >->}[ur]^{i_m}\ar[rr]^{\rho_m}&&\frac{A}{(\ker\ \varphi)^{m+1}}\ar[dl]^{\ol{\varphi}}\\
&B,\\
}\]
where $i_m$ is a relative Sullivan model for $\rho_m$.
\begin{theorem}\label{th:MainRelcat}
With previous notation, $\relcat(f_0)$ is the smallest $m$ such that $i_m$ admits a retraction $r$ verifying $\varphi\circ r\simeq \ol{\varphi} \circ \theta_m \mbox{ rel } A$.
\end{theorem}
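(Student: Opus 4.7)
The plan is to mirror the proof of Theorem \ref{th:Main} while tracking the extra compatibility $s\circ f\simeq \iota$ that distinguishes $\relcat$ from $\secat$; this rationalises, at the algebraic level, to the homotopy $\varphi\circ r\simeq p_m$ being constant on the image of $A$, i.e.\ ``rel $A$''. By Proposition \ref{pro:GaneanModelNilN} the map $G_m(f)$ admits a model $\lambda_m\colon A\to C_m$ in $\cdga(B)$ with $\nil_B C_m=m$, and by standard rational homotopy techniques (as in Theorem \ref{th:Main}), $\relcat(f_0)\le m$ is equivalent to the existence of a cdga retraction $r\colon C_m\to A$ of $\lambda_m$ such that $\varphi\circ r\simeq p_m$ via a homotopy which is constantly $\varphi$ on $\lambda_m(A)$. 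Indeed the retraction models the homotopy section $s$ of $G_m(f)$, while the ``rel $A$'' condition encodes $s\circ f\simeq\iota$: both $\varphi\circ r$ and $p_m$ restrict to $\varphi$ via $\lambda_m$, reflecting that $\iota$ is already a lift of $f$.

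Next, as in the proof of Theorem \ref{th:Main}, $\nil_B C_m=m$ forces $\lambda_m$ to factor as $\bar\lambda_m\circ\rho_m$ with $\bar\lambda_m\colon A/(\ker\varphi)^{m+1}\to C_m$ in $\cdga(B)$ satisfying $p_m\circ\bar\lambda_m=\bar\varphi$. Set $\mu_m:=\bar\lambda_m\circ\theta_m\colon (A\otimes\Lambda Z_m,D)\to C_m$; this is a $\cdga(B)$-morphism with $\mu_m\circ i_m=\lambda_m$ and $p_m\circ\mu_m=\bar\varphi\circ\theta_m$. Since both $\rho_m$ and $\lambda_m$ model $G_m(f)$, the argument of \cite[Proposition 12]{Carrasquel10} identifies $(A\otimes\Lambda Z_m,D)$ and $C_m$ as weakly equivalent over $A$, making $\mu_m$ a weak equivalence; equivalently, $i_m$ serves as a relative Sullivan model of $\lambda_m$ in $\cdga(B)$.

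With $\lambda_m=\mu_m\circ i_m$ and $\mu_m$ a weak equivalence, the two retraction conditions correspond. Given $r$ on the Ganea side, the composite $r\circ\mu_m$ is a strict retraction of $i_m$, and post-composing the homotopy $\varphi\circ r\simeq p_m$ rel $A$ with $\mu_m$ yields $\varphi\circ (r\circ\mu_m)\simeq\bar\varphi\circ\theta_m$ rel $A$, since $p_m\circ\mu_m=\bar\varphi\circ\theta_m$. Conversely, given a retraction $r'$ of $i_m$ with the required homotopy, Lemma \ref{lem:FibrewiseLifting} (or its refinement for maps into the fibrant-cofibrant object $A$) allows one to lift $r'$ across $\mu_m$ to a retraction of $\lambda_m$ carrying a matching ``rel $A$'' homotopy.

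The main technical obstacle I anticipate is the bookkeeping of the ``rel $A$'' condition across the weak equivalence $\mu_m$: one must identify the path object in $\cdga(B)$ whose image in $\cdga$ captures precisely homotopies fixing $A$, and verify that the bijection on homotopy classes supplied by Lemma \ref{lem:FibrewiseLifting} respects this refinement. Modulo that point, the argument is a $\cdga(B)$-refinement of the Ganea-F\'elix-Halperin correspondence already used in the proof of Theorem \ref{th:Main}.
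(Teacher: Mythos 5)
Your overall strategy matches the paper's: both invoke Proposition \ref{pro:GaneanModelNilN} to get a $\cdga(B)$-model $\lambda_m\colon A\to C_m$ with $\nil_B C_m=m$, and then transfer the retraction-plus-$\mbox{rel }A$ condition between a Sullivan model of $\rho_m$ and a Sullivan model of $\lambda_m$ by means of a comparison map constructed with Lemma \ref{lem:FibrewiseLifting}. The paper's actual proof builds $w\colon(A\otimes\Lambda Z_m,D)\to(A\otimes\Lambda W_m,D)$ with $w\circ i_m=j_m$ and deduces the implication ``$j_m$ retractible $\Rightarrow$ $i_m$ retractible'' directly by $r:=r'\circ w$, staying entirely inside $\cdga(B)$ and never passing through $C_m$ itself.

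There are two soft spots in your version. First, the assertion that ``$\rho_m$ models $G_m(f)$'' and that consequently $\mu_m=\ol{\lambda_m}\circ\theta_m$ is a weak equivalence does not follow from \cite[Proposition 12]{Carrasquel10}: that result (and the proof of Theorem \ref{th:Main}) only transfers the \emph{existence of a homotopy retraction} across the commutative square in which $\lambda_m=\ol{\lambda_m}\circ\rho_m$; it does not assert that $\ol{\lambda_m}$ is a quasi-isomorphism, and nothing in the paper establishes that. You need $\mu_m$ (equivalently $w$) to be a weak equivalence only for the converse direction, and that is precisely the part where you should lean on Lemma \ref{lem:FibrewiseLifting} to invert $w_\#$ on $[-,A]_B$, rather than claiming $\mu_m$ is a quasi-isomorphism outright. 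Second, your last paragraph correctly identifies the remaining issue -- that the homotopy-class bijection of Lemma \ref{lem:FibrewiseLifting} must be refined so that it preserves the ``$\mbox{rel }A$'' constraint -- but leaves it unresolved; the paper avoids this by choosing $w$ to satisfy $w\circ i_m=j_m$ strictly, so that precomposition with $w$ sends homotopies that are constant on the image of $j_m$ to homotopies constant on the image of $i_m$. A minor point: you speak of a ``cdga retraction $r\colon C_m\to A$ of $\lambda_m$'', but $\lambda_m$ is not a cofibration, so the retraction must be taken of the relative Sullivan model $j_m$, which is exactly what the paper does.
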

\begin{proof}
Consider the commutative diagram in the proof of Theorem \ref{th:Main},
where $p_m$ is a model for $\iota$ in Diagram (\ref{diag:Ganea}). Taking $j_m$ a relative model of $\lambda_m$ and applying Lemma \ref{lem:FibrewiseLifting} we get a diagram in $\cdga(B)$
\newdir{ >}{{}*!/-8pt/@{>}}
\[\xymatrix{
&\pa A\otimes \Lambda Z_m,D\pb\ar[dr]^w&\\
A\ar@{ >->}[ur]^{i_m}\ar@{ >->}[rr]_-{j_m}&&\pa A\otimes\Lambda W_m,D\pb.\\
}\]
If $j_m$ admits a retraction $r'$ such that $\varphi\circ r'\simeq p_m\mbox{ rel } A$ then $i_m$ admits a retraction $r:=r'\circ w$ such that $\varphi\circ r=\varphi\circ r'\circ w\simeq p_m\circ \omega=\ol{\varphi}\circ\theta_m\mbox{ rel } A$.
\end{proof}

%

\section*{Acknowledgements}
The author is grateful to the referee for his or her helpful remarks. The author also acknowledges the Belgian Interuniversity Attraction Pole (IAP) for support within the framework ``Dynamics, Geometry and Statistical Physics'' (DYGEST).

\vspace{2cm}
\noindent Institut de Recherche en Math\'ematique et Physique,\\
Universit\'e catholique de Louvain,\\
2 Chemin du Cyclotron,\\
1348 Louvain-la-Neuve, Belgium.\\
E-mail: \texttt{jose.carrasquel@uclouvain.be}

\end{document}